\date{\today} 
\newtheorem{theorem}{Theorem}
\newtheorem{definition}{Definition}
\newtheorem{corollary}{Corollary}
\theoremstyle{remark} 
\DeclareMathOperator{\WF}{WF} 
\DeclareMathOperator{\dist}{dist} 
\newcommand{\eps}{\varepsilon}
\newcommand{\R}{\mathbf{R}}
\renewcommand{\r}[1]{(\ref{#1})} 
\newcommand{\PDO}{$\Psi$DO} 
\newcommand{\be}[1]{\begin{equation}\label{#1}} 
\newcommand{\ee}{\end{equation}} 
\renewcommand{\i}{\mathrm{i}} 
\newcommand{\bo}{\partial M} 
\title[Recovery of a metric from the DN map]
{On the stable recovery of a metric from\\ the hyperbolic DN map with incomplete data} 
\author[P. Stefanov]{Plamen Stefanov}
\address{Department of Mathematics, Purdue University, West Lafayette, IN 47907}
\thanks{First author partly supported by a NSF  Grant DMS-1301646}
\author[G. Uhlmann]{Gunther Uhlmann}
\address{Department of Mathematics, University of Washington, Seattle, WA 98195,  Department of Mathematics University of Helsinki, Finland and Institute for Advanced Study of the Hong Kong University of Science and Technology}
\thanks{Second author partly supported by NSF, a Simons Fellowship and the Academy of Finland}
\author[A. Vasy]{Andras Vasy}
\address{Department of Mathematics, Stanford University, Stanford  CA 94305}
\thanks{Third author partly supported by NSF Grant CMG-1025259 and
 DMS-1361432 }
\begin{document} 

\begin{abstract}
We show that given  two hyperbolic Dirichlet to Neumann maps associated to two Riemannian metrics of a Riemannian manifold with boundary which coincide near the boundary are close then the lens data of the two metrics is the same. As a consequence, we prove uniqueness of recovery a conformal factor (sound speed) locally under some conditions on the latter. 
\end{abstract}

\maketitle 
\section{Introduction} 

Let $(M, g)$ be a Riemannian manifold with smooth boundary. 
Let $\Delta_g$ be the Laplace-Beltrami operator on $M$. In local coordinates, $g$ is represented as a symmetric positive definite matrix $g=(g_{ij})$.
The Laplace-Beltrami operator is given in local coordinates by 
\[
\Delta_g = (\det g)^{-\frac12}  \frac\partial{\partial x_i}
(\det g)^{\frac12}g^{ij}\frac\partial{\partial x_j},
\]
where $(g^{ij}) = (g_{ij})^{-1}$, $\det g = \det(g_{ij})$. Let $A$ be a first order differential operator with smooth coefficients. 
Consider the following initial boundary value problem
\begin{equation}   \label{11}
\left\{
\begin{array}{rcll}
(\partial_t^2 - \Delta_g-A)u &=&0 &  \mbox{in $(0,T)\times M$,}\\
u|_{t=0} = \partial_t u|_{t=0} &=& 0 & \mbox{in $M$,}\\
u|_{(0,T)\times \bo} &=&f, &
\end{array}
\right.               
\end{equation}
where $T>0$ is fixed, $f\in H_{(0)}^1((0,T)\times\bo)$, and the subscript $(0)$ indicates that the functions in that space vanish for $t=0$.
Denote by $\nu=\nu(x)$ the unit outer normal to $\bo$ at $x\in\bo$. Define the hyperbolic Dirichlet-to-Neumann (DN) map $\Lambda_{g}$ by
$$
\Lambda_{g} f := \partial_\nu u \big|_{(0,T)\times\bo},
$$
where $\partial_\nu$ is the exterior unit (in the metric $g$) normal derivative. 
The problem we study is if we can determine  $g$, up to an isometry, in a stable way from the DN map. Uniqueness of such recovery has been established in \cite{BelishevK92}, using the boundary control method developed by \cite{Belishev_87} that relies in the fundamental unique continuation result of Tataru \cite{tataru95}. In this article we do not consider the determination of $A$. See \cite{KathcalovKL-book} and the references there for results in this direction.
H\"older type stability estimates for a generic class of simple metrics were established in \cite{SU-IMRN}. For the case of simple metrics with lower order terms stability estimates were given in \cite{Carlos_12} modulo the appropriate gauge transformations of the lower order terms,  see also \cite{BellassouedDSF}. 
We recall that simplicity of a metric means that the boundary is strictly convex and that the metric has no conjugate points.

The goal of this note is to study stability for
possible non-simple metrics assuming we know $\Lambda$ locally. At present, we restrict our attention to recovery of the principal symbol which dictates the geometry.  We show that in Theorem 3.1 that if the DN maps associated
to to Riemannian metrics are close enough in their natural norms, and $g$ is known in a neighborhood of $\bo$, then its lens relation is uniquely determined (exactly), and the problem is then reduced to uniqueness (not stability) of the lens rigidity one. We also prove a local version of this result. The lens rigidity problem, and the closely related boundary rigidity problem are well studied, see, e.g., \cite{Croke05,Croke_scatteringrigidity,CrokeH02, LassasSU, Mu2,PestovU, SU-JAMS, SU-lens, SUV_localrigidity} and for some classes of metrics,  $g$ can be recovered  up to isometry. The last two works deal with non-simple metrics. 

A result of a similar type for simple metrics was proved recently in \cite{BaoZhang}. In fact, they considered non-simple metrics under a certain  condition for the conjugate points but it is not known if there is a metric for which this condition holds.  In contrast to the earlier works however, it was noticed in \cite{BaoZhang} that closedness of the DN maps leads to exact equality of the metrics (up to an isometry). We explain the reason for this below. We want to point out that there are two factors which make such a result possible: the assumption that we know $g$ near the boundary, and the fact that we work with the standard norm of the DN map. In fact, those results show that this norm may not be natural when studying stability questions. 

We also prove in section 4 new stability for metrics in the same conformal class using Theorem 3.1
and the paper \cite{SUV_localrigidity} of the authors. More precisely, if $g_0$ is given,  and $g=c^{-2}g_0$, is in the same conformal class of $g_0$ we want to recover $c$ in a stable way. In this case there is no isometry involved. One of the reasons for considering the operator $A$ is to include in the results the acoustic equation $(\partial_t^2-c^2\Delta_{g_0})$ as well; then  we can take $g=c^{-2}g_0$ but $A$ is non-trivial. In this case $c$ represents physically  the sound speed of the medium. For simple metrics in the same conformal class H\"older type stability estimates were proven in \cite{SU-IMRN}. We remark is that in this
case the result is for all the sound speeds not just a generic set. For other type of stability estimates for the sound speed see \cite{ShitaoO}.

\section{Preliminaries} 
By \cite{KathcalovKL-book,LasieckaLT}, for any $f\in H_{(0)}^1((0,T)\times\bo)$,   the problem \r{11} has unique solution $u\in C([0,T]\times H^1(M))\cap C(  [0,T]\times L^2(M) ) $ continuously depending on $f$; and moreover, 
\be{2.1}
\Lambda :  H_{(0)}^1((0,T)\times\bo)\to  L^2((0,T)\times\bo)
\ee
 is continuous. 
Here, $H_{(0)}^1((0,T)\times\bo)$ is the closed subspace of $H^1((0,T)\times\bo)$ consisting of functions vanishing at $t=0$. 
 One cannot expect that it would  depend continuously on $g$ and in particular on $c$ when $g=c^{-2}g_0$ with $g_0$ fixed. To illustrate this, consider the following well known simple example first: the shift operator $U_cf(x) = f(x-c)$ with $c$ a constant. This can be considered as the dynamics at time $t=1$ associated to the transport operator $\partial_t+ c\partial_x$. Then $U_c$ is a unitary operator on $L^2(\R)$ for any $c$ but $U_c$ is not a continuous family of operators in the operator norm. To see this, one constructs functions $f_n$ with norm one with support shrinking to $0$. Then   actually, $\|U_{c_1}-U_{c_2}\|=2$ for $c_1\not=c_2$. In particular, $\|U_{c_1}-U_{c_2}\|<2$ implies $c_1=c_2$.
This can be extended to elliptic FIOs of order $1$, for example, with a canonical relation a graph of a diffeomorphism. To prove a similar result, one constructs a sequence of distributions asymptotically  concentrated at a single point in the phase space. In short, this is the idea of the proof of Theorem~\ref{thm_easy2} below. 

Going back to $\Lambda$, it is an elliptic FIO of order one microlocally for covectors which are projections of characteristic  covectors pointing strictly into $\R\times M$, and away from any neighborhood of $t=0$, so we are in a similar situation. On the other hand, near $t=0$ and $x=y\in\bo$, and directions pont strictly into $M$, $\Lambda$ is a \PDO\ of order $1$ depending continuously on $c$. 
Near tangent directions however,  the two Lagrangians intersect. 
The assumption that $g$ is known near $\bo$ eliminates the need to study that intersection, where most of the difficulties lie. It also eliminates the need to study the diagonal (i.e., the transversal geodesics, at $t=0$). The latter problem is well understood in terms of uniqueness and stability, even for non-convex boundaries, see  \cite{SU-lens, SU-JAMS} and the references there.

We define the scattering relation next. Assume for convenience that $\bo$ is strictly convex, see also \cite{SU-lens} for the more general case. 
Let $\partial_\pm SM$ be the open sets of all vectors $(x,v)$ with $x\in\bo$, $v$ unit in  the metric $g$,    pointing outside/inside $M$. 
We define the lens relation
\[
L: \partial_- SM \longrightarrow \partial_+SM 
\]
in the following way: for each $(x,v)\in \partial_- SM$, $L(x,v)=(y,w)$, where $y$ is the exit point, and $w$ the exit direction, if exist, of the maximal unit speed geodesic $\gamma_{x,v}$ in the metric $g$, issued from $(x,v)$. Let 
\[
\ell: \partial_-SM \longrightarrow \R\cup\infty
\]
be its length, possibly infinite.  
The metric $g$ is called non-trapping, if $\ell$ has a finite upper bound. In the latter case, we denote by $T_0(M)$ the least one. 

It is convenient to parameterize $\partial_\pm SM$ by its projection on the unit ball bundle $B\bo$ of the boundary. In either case, the projection uniquely identifies one of the two unit vectors with the given projection by the choice of the sign $\pm$. We will keep the notation $L$ and $\ell$ for the so parameterized quantities, as well. Given $\rho'\in B\bo$, the geodesic issued from $\rho\in \partial_-SM$ having a projection $\rho'$ will be also called the geodesic issued from $\rho'$ and sometimes denoted by $\gamma_{\rho'}$. 

We will identify below vectors and covectors by the metric.

\section{Main results}
\begin{theorem}\label{thm_easy2}
Let $\mathcal{K}\Subset\partial_-SM$ be compact with $\max_{\mathcal{K}}\ell<\infty$. Let $\Gamma_\pm$ be two open subsets of $\bo$ so that 
$\pi(K)\subset \Gamma_-$ and $\pi\circ L(K)\subset \Gamma_+$. Let $\eps>0$ and $T-\eps> 
\max_{\mathcal{K}}\ell$. 
Then  there exists $\delta=\delta(\mathcal{K})>0$ so that if
\be{eq_easy}
\|\Lambda_{g}-  \Lambda_{\tilde g}\|_{H_{0}^1((0,\eps)\times\Gamma_1)\to  L^2((0,T)\times\Gamma_2)}\le \delta_,
\ee
for some other metric $\tilde g$, 
then $L_{g}=L_{\tilde g}$ and $\ell_{ g}= \ell_{\tilde g}$ on $\mathcal{K}$.  
\end{theorem}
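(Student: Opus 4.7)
The strategy is the one indicated in the introduction: exploit the fact that, microlocally on covectors corresponding to geodesics that enter $M$ strictly transversally and away from $t=0$ (the regime isolated by the assumption that $g$ is known near $\bo$), $\Lambda_g$ is an elliptic FIO of order one whose canonical relation is essentially the graph of the lens map $L_g$ together with the time-shift by $\ell_g$. If two such FIOs had canonical relations disagreeing at a point, then their difference applied to a microlocalized quasi-mode at that point would have $L^2$ norm bounded below by a positive multiple of the $H^1$ norm of the quasi-mode, so closeness of $\Lambda_g$ and $\Lambda_{\tilde g}$ in the stated operator norm will force the two canonical relations to coincide on $\mathcal{K}$.

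More concretely, fix $\rho\in\mathcal{K}$ with base point $x_0\in\Gamma_-$ and pick a covector $\zeta_0=(t_0,x_0,\tau_0,\xi_0)\in T^*((0,\eps)\times\Gamma_-)$ with $0<t_0<\eps$, $\tau_0<0$, and $\xi_0$ equal to the tangential component of the (co)direction of $\rho$ at $x_0$, so that the null bicharacteristic for $\partial_t^2-\Delta_g$ through $\zeta_0$ lifts the geodesic from $\rho$ shifted in time by $t_0$; the condition $t_0+\ell_g(\rho)<T$ places the exit time inside the observation window. I would then build a sequence $f_n\in H^1_{(0)}((0,\eps)\times\Gamma_-)$ of Gaussian-beam quasi-modes of the form $\chi(t,x)e^{\i n\phi(t,x)}$ (with an appropriate power of $n$ in front) microlocalized at $\zeta_0$. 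By the FIO description, $\Lambda_g f_n$ is, up to an $O(n^{-\infty})$ error in $L^2$, a Gaussian beam of the same order, microlocalized at the exit covector over $L_g(\rho)$ and peaking at time $t_0+\ell_g(\rho)$; ellipticity gives $\|\Lambda_g f_n\|_{L^2((0,T)\times\Gamma_+)}\ge c_1(\rho)\|f_n\|_{H^1}$ for some $c_1(\rho)>0$ independent of $n$, and likewise for $\tilde g$.

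The dichotomy is now: either the two triples of exit base-point, exit covector, and exit time coincide at $\rho$, whence $L_g(\rho)=L_{\tilde g}(\rho)$ and $\ell_g(\rho)=\ell_{\tilde g}(\rho)$; or they differ, in which case the Gaussian beams $\Lambda_g f_n$ and $\Lambda_{\tilde g} f_n$ are asymptotically supported in disjoint subsets of $(0,T)\times\Gamma_+$, so that
$$
\|(\Lambda_g-\Lambda_{\tilde g})f_n\|_{L^2}\ge \tfrac{1}{2}\|\Lambda_g f_n\|_{L^2}\ge \tfrac{c_1(\rho)}{2}\,\|f_n\|_{H^1}.
$$
Thus $\|\Lambda_g-\Lambda_{\tilde g}\|\ge c_1(\rho)/2>0$; choosing $\delta=\delta(\mathcal{K})$ strictly smaller than $\inf_{\rho\in\mathcal{K}}c_1(\rho)/2$ — an infimum that is positive by compactness of $\mathcal{K}$ and the continuous dependence of the Gaussian-beam parameters on the base phase point — contradicts \r{eq_easy} and rules out this alternative.

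The main obstacle, as I see it, is not the Gaussian-beam construction itself but the verification that the FIO description of $\Lambda_g$ is genuinely valid and elliptic, with parameters depending continuously on $\rho\in\mathcal{K}$, on the relevant part of phase space. The assumption, carried over from the introduction, that $g$ and $\tilde g$ agree near $\bo$ is essential here, as it isolates the calculation away from the glancing region and away from the intersection of the two Lagrangians at $t=0$, where the clean FIO picture breaks down.
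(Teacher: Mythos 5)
Your proposal follows essentially the same route as the paper's proof: a semiclassical quasi-mode microlocalized at a single boundary covector, the elliptic $h$-FIO description of $\Lambda_g$ near the exit point giving a lower bound uniform over $\mathcal{K}$ by compactness, and the dichotomy that disagreeing lens data forces $\Lambda_g f$ and $\Lambda_{\tilde g}f$ to have asymptotically disjoint (hence almost orthogonal) localizations, contradicting \r{eq_easy}. The only cosmetic difference is that you use Gaussian beams where the paper uses coherent states (and the paper additionally spells out, via finite speed of propagation and $g=\tilde g$ near $\bo$, that the near-diagonal $\Psi$DO contributions at $t\approx t_0$ cancel in the difference, a point you only gesture at), but the argument is the same.
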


\begin{proof}
We will use solutions essentially supported near a single geodesic which may have conjugate points.  In \cite{BaoZhang}, this is done using Gaussian beams. 
We will use the semiclassical calculus  with $h>0$ a small parameter, see e.g., \cite{Zworski_book}. The properties of the solutions then follow from the calculus, and there is no need to construct them explicitly even though we can: on each short enough interval for $t$, those solutions are given by the geometric optics construction with a suitable phase solving the eikonal equation, see e.g., \cite{SU-thermo}. 
 We still need to construct a reflected solution $u_1$ below but the latter needs to be constructed for a small time interval, and the same standard geometric optics construction then works well, see also \cite{SU-thermo_brain}. 

Recall that the following functions
\[
F(z;z_0,\zeta^0) = h(\pi h)^{-n/4} e^{\frac\i{h} ( (z-z_0)\cdot\zeta^0- |z-z_0|^2/2)}.
\]
are called coherent states, for any fixed $(z_0,\zeta^0)\in \R^{2n}$, $\zeta^0\not=0$, see \cite{Zworski_book}. The semiclassical wave front set $\WF_h(F)$ consists of one point only, $(z_0,\zeta^0)$. The extra factor $h$ is chosen so that $1/C<\|F\|_{H^1}<C$. Without that factor, $F$ is unit in $L^2$. 

Fix $\rho_0:= (x_0,\xi^0)\in B^*\bo$, and work in fixed local coordinates in some neighborhood $U_0$ of $x_0$. Set 
\be{f}
f_{\rho_0} = F(t,x; \eps/2,x_0,-1,\xi^0).
\ee
Here, $z=(t,x)$. We can always assume that $f_{\rho_0}$ is localized near $(\eps,x_0)$ with a cut-off function.
This is the coherent state on $\bo$ with $\WF_h(f_\rho)$ at $(t_0,x_0,\tau^0,{\xi^0})= (\eps_0/2,x_0,-1,\xi^0)$. The standard geometric optics construction, see, e.g., \cite{SU-IMRN, SU-thermo_brain}, implies that the solution $u$ of \r{11} with $f=f_{\rho_0}$ has a semi-classical wave front on the zero bicharacteristic issued from $(y_0, \eta^0)$, where $|\eta^0|=1$ (the norm is in the metric $g$), $\eta^0= (\xi^{0},\eta_n^0)$ in boundary normal coordinates, and $\eta_n^0$ is the positive root of $(\eta_n^0)^2 +|\xi^0|^2=1$.  In other words, $\eta^0$ is the outgoing (future pointing) unit covector with projection $\xi^0$. There are no singularities propagating back to $t=0$ because of the zero Cauchy data there. If we replace $\tau_0=-1$ in \r{f} by $\tau_+=+1$, we get a similar singularity with $\eta_n^0<0$ propagating along the  bicharacteristic determined by $-(\xi^0,\eta_n^0)$ (in other words, by $(\xi^0,\eta_n^0)$ but for negative times). We are not going to use the second kind.  

 Assume (as in the theorem) that the geodesic issued from $\rho_0$ at $t=\eps/2$ is non-trapping, i.e., hits $\bo$ again in $\Gamma_2$, transversely, for a finite time $t_1=\ell(\rho_0)$, at $(t_1,x_1)$.  Then the wave front set of $u$ is on the bicharacteristic over that geodesic (see, e.g., \cite{Zworski_book}), until it hits $\bo$, again, then it reflects (see \cite{SU-IMRN}, etc. Let for a moment $u_0$ be as $u$ but propagating outside of $M$ near $(t_1,x_1)$; with some smooth extension of $g$ there. The map $f\mapsto \partial_\nu u_0|_{\R\times\bo}$, for any $f$ with wave front set near $\rho_0$ (notice that tangential directions are avoided) is an $h$-FIO of order $1$ with a canonical relation a diffeomorphism. Let $u_1$ be the reflected solution, i.e., $u_0+u_1=0$ on $\bo$ near $(t_0,x_0)$, and $u_1$ has wave front set obtained by that of $u_0$ by reflection, see  \cite{SU-IMRN,SU-thermo}. Then in $M$, in some neighborhood $U_1$ of  $(t_1,x_1)$ in $\R\times\bo$, $u$ is given by $u=u_0+u_1$ and  the map $f\mapsto \partial_\nu u|_{U_2}$ is an $h$-FIO of order $1$  again, with principal symbol equal to twice that of the FIO above. Therefore, if we choose  $T_1>t_1$ so that it is less than the time $t_2$ for which the second reflection takes place,  $\WF_h\left(u_{[0,T_1]\times\bo}\right)$ lies  over $U_0\cup U_1$. 

So far $\rho_0 =(x_0,\eta^0)$ was fixed and the construction is not invariant under change of variables. Let us now vary $\rho$ over the whole $\mathcal{K}$. For $\rho$ near each such $\rho_0$, we have an uniform estimate from below for $\|\Lambda f_\rho\|_{L^2(U_1)}$ for $0<h< h_0$ with some $h_0>0$ with $U_1 =U_1(x_0,\eta^0)$ as above. Then we can chose a finite sub-cover with the same property and define $f_\rho$ globally by a partition of unity. 
Therefore, there exists $\delta_0>0$ so that
\be{3.1}
2\delta_0 \le \|\Lambda f_\rho\|_{L^2((\eps,T)\times \bo)},\quad \forall \rho\in \mathcal{K}, \quad 0<h< h_0.
\ee
 Notice that above, we excluded the  interval $(0,\eps)$.

Let $\tilde u$ be constructed as above, but related to $\tilde g$. Under the assumptions of the theorem, using finite speed of propagation, $u(t,x)=\tilde u(t,x)$ for $t$ near $\eps/2$ and all $x$. Then the normal derivatives near $(\eps/2,x_0)$ equal as well, i.e., $\Lambda_g f_\rho= \Lambda_{\tilde g} f_\rho$ there. Assume that $(L_g,\ell_g)$ and $(L_{\tilde g},\ell_{\tilde g})$ do not coincide in $\mathcal{K}$, which includes the possibility that for some $\rho\in\mathcal{K}$, $\tilde \gamma_\rho$, related to $\tilde g$, is trapping, i.e., $\ell_{\tilde g}(\rho)=\infty$. Then there exists $\rho\in\mathcal U$ so that $(L_g(\rho),\ell_g(\rho)) \not = (L_{\tilde g}(\rho), \ell_{\tilde\rho}(\rho))$, or the latter is undefined. 
Then near $\WF_h(\Lambda_g f_\rho)$, we have $\Lambda_{\tilde g} f_\rho=O(h^\infty)$ and over $(\eps,T)\times \bo$, $\Lambda_g f_\rho$ and $\Lambda_{\tilde g} f_\rho$ have non-intersecting wave front sets if $T$ is chosen larger than $\eps/2+\ell(\rho)$ but smaller than the second time the reflected ray hits $\bo$ again. Then they are orthogonal modulo $O(h^\infty)$. 
Then
\be{3.2}
\begin{split}
\| (\Lambda_g - \Lambda_{\tilde g} )f_\rho\|^2_{L^2((T_0,T)\times \bo)}  &= \| \Lambda_g f_\rho\|^2_{L^2((\eps,T)\times \bo)}+  \|  \Lambda_{\tilde g} f_\rho\|^2_{L^2((\eps,T)\times \bo)} + O(h^\infty)\\
&\ge \| \Lambda_g f_\rho\|^2_{L^2((\eps,T)\times \bo)}  - O(h^\infty)
\end{split}
\ee
This however contradicts \r{3.1} and \r{eq_easy} for $0<h\le h_0(\rho,\tilde g)\ll1$. 
\end{proof}

In particular, with $\Gamma_1=\Gamma_2= \bo$, and 
if $g-\tilde g$ is a priori supported away from $\bo$, then $\delta\ll1$ in \r{eq_easy} implies $L_g=L_{\tilde g}$ and $\ell_g = \ell_{\tilde g}$ globally, which is one of the results in \cite{BaoZhang}.  

Note that the l.h.s.\ of \r{eq_easy} in Theorem~\ref{thm_easy2} can be microlocalized by replacing $\Gamma_\pm$ with open sets in the phase space $B\bo$; then we remove the projection $\pi$ in the formulation of Theorem~\ref{thm_easy2}. To localize there, we can choose suitable zero order \PDO s with principal symbols supported in those sets, and use them as cutoffs. 

\section{Recovery of a sound speed} 

To get  results for  non-simple metrics and partial data, we apply a recent result obtained by the authors in  \cite{SUV_localrigidity}. Assume in this section, that $g=c^{-2}g_0$ with $g_0$ fixed. We showed in \cite{SUV_localrigidity}, that one can recover $c$ near some $p\in\bo$ in $N$, if $g$ is given, by the lens relation known locally near $S_p\bo$. Then we extended this local result globally, assuming the foliation condition below. 
Note that we can have conjugate points under the foliation condition, without a restriction of their type. Also, the global arguments in  \cite{SUV_localrigidity} are based on layer stripping arguments, and work even if recovery of the wave front set from the X-ray transform may not be possible locally. 
  Below, $\tilde M$ is an open extension of $M$.

\begin{definition}\label{def_5.1}
Let $M_0\subset M$ be compact. We say that $M_0$ can be foliated by strictly convex hypersurfaces if there exists 
 a smooth function $\rho: \tilde M\to[0,\infty)$ which level sets $\Sigma_t=\rho^{-1}(s)$, $0\le s\le S$
 with some $S>0$, restricted to $M$, are strictly convex  viewed from $\rho^{-1}(0,s)$ for $g$;   $d\rho\not=0$  on these level sets,   $\Sigma_0\cap M=\emptyset$, and $M_0\subset \rho^{-1}(0,S]$. 
\end{definition}

Examples of foliations are the geodesic balls for metrics of negative curvature, or the surfaces $\{x\in M|\; \dist(x,\bo)=\eps\}$, for $\eps\ll1$ if $\bo$ is strictly convex. Any $M_0$ as above is non-trapping in the sense that any maximal geodesic in $M_0$ is of finite length. We denote the least upper bound of those geodesics by $T_0(M_0)$. 

Then we get the following local result, as consequence of Theorem~{5.2} in \cite{SUV_localrigidity}. Note that we use the uniqueness part of the latter only. 

\begin{corollary}\label{col3}
Let $n=\dim M\ge3$. 
Assume that $M_0\subset M$    can be foliated by strictly convex  hypersurfaces $\rho^{-1}(s)$, $0\le s\le S$. 
 Let $\Gamma\subset \bo$ be  a neighborhood  
 of $\Gamma_0:=\rho^{-1}(0,S)\cap \bo$. 
 Let $c=\tilde c$ in some neighborhood of $\Gamma_0$ in $M$. 
Then  if \r{eq_easy} holds with $g=c^{-2}g_0$, $\tilde g = \tilde c^{-2}g_0$, and 
with $\Gamma_1=\Gamma_2=\Gamma$, and $T_0>T_0(M)$, we have 
\be{stab_global}
c=\tilde c\quad \text{on $M_0$}.
\ee
\end{corollary}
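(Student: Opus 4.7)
The plan is to apply Theorem~\ref{thm_easy2} to upgrade closeness of the DN maps to exact equality of the lens data on a well-chosen compact set of directions, and then to invoke the local lens rigidity result for conformal factors from \cite{SUV_localrigidity}, which already includes a layer-stripping globalization via foliations. First, for each leaf $\Sigma_s=\rho^{-1}(s)$ of the foliation, I would choose a compact family $\mathcal{K}_s\subset\partial_-SM$ of inward unit vectors whose footpoints lie in $\Gamma$ and whose geodesics are nearly tangent to $\Sigma_s$ at some base point. The strict convexity of $\Sigma_s$ viewed from $\rho^{-1}(0,s)$ forces such geodesics to curve back into the region on the appropriate side of $\Sigma_s$ and return to $\bo$ in uniformly bounded time (controlled by $T_0(M)$), exiting through $\Gamma_0\subset\Gamma$; the condition $\Sigma_0\cap M=\emptyset$ guarantees the length bound. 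Thus the hypotheses of Theorem~\ref{thm_easy2} are met with $\Gamma_1=\Gamma_2=\Gamma$, and taking $\delta$ small in \r{eq_easy} deduces $L_g=L_{\tilde g}$ and $\ell_g=\ell_{\tilde g}$ on $\mathcal{K}_s$.

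Second, I would feed this exact lens data equality into the local rigidity theorem (Theorem~5.2 of \cite{SUV_localrigidity}) and run it as a layer-stripping induction along the foliation. Set $s_\ast=\sup\{s\in[0,S]\colon c=\tilde c\text{ on }\rho^{-1}[0,s]\cap M\}$; the hypothesis $c=\tilde c$ near $\Gamma_0$ ensures $s_\ast>0$. At each point $p\in \Sigma_{s_\ast}\cap M$, the local result asserts that equality of $(L,\ell)$ on geodesics nearly tangent to $\Sigma_{s_\ast}$ at $p$, combined with $c=\tilde c$ on the known side $\rho<s_\ast$, forces $c=\tilde c$ in a one-sided neighborhood of $p$ on the other side. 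Compactness of $\Sigma_{s_\ast}\cap M_0$ turns the pointwise advance into a uniform one of width $\eta>0$, so $s_\ast<S$ is impossible and $s_\ast=S$, yielding \r{stab_global}. Because $g_0$ is fixed and only the conformal factor varies, there is no diffeomorphism gauge and the equality is literal, not modulo isometry.

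The main obstacle is the first step: arranging $\mathcal{K}_s$ so that the coherent-state solutions constructed in the proof of Theorem~\ref{thm_easy2} behave well, namely, their semiclassical wave front sets avoid tangential encounters with $\bo$, they hit the boundary transversally at their first reflection, the reflection point lies in $\Gamma$, and the first reflection time is strictly less than the window $T_0$ but strictly greater than $\eps/2$, while simultaneously the set $\mathcal{K}_s$ covers all directions required as input by the local rigidity theorem at the leaf $\Sigma_s$. Definition~\ref{def_5.1} is engineered exactly for this: strict convexity of each leaf gives transversal re-entry into a prescribed one-sided neighborhood, and $\Sigma_0\cap M=\emptyset$ supplies the uniform length bound $T_0(M)$. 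Once the compact family is in place, the induction on the foliation parameter is essentially verbatim from \cite{SUV_localrigidity}, with Theorem~\ref{thm_easy2} replacing the standing hypothesis there that the lens data are known exactly.
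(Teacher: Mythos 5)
Your proposal follows essentially the same route as the paper, which states Corollary~\ref{col3} as an immediate consequence of Theorem~\ref{thm_easy2} (giving exact equality of the lens data on the relevant compact families of geodesics through $\Gamma$) combined with the uniqueness part of Theorem~5.2 of \cite{SUV_localrigidity}, whose layer-stripping globalization along the foliation is exactly the induction you describe. The paper supplies no further detail, so your filled-in argument is consistent with, and somewhat more explicit than, the intended proof.
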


Clearly, we can require above only the geodesics hitting $\Gamma_0$ to be non-trapped, with $T_0$ being an upper limit of their lengths. 

Combining Theorem~\ref{thm_easy2} with \cite{SUV_localrigidity}, we immediately get the following. 
\begin{corollary} \label{cor_easy}
Let $n=\dim M\ge 3$. Assume that $M=M_0\cup M_1$ is relatively open and  can be foliated with strictly convex surfaces with respect to the metric $g$, and $M_1$ is simple. Assume, as above, that $c=\tilde c$ near $\bo$.  If \r{eq_easy} holds  with $\Gamma_1=\Gamma_2=\bo$, $T>T_0(M)$, then $c=\tilde c$. 
\end{corollary}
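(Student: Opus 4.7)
The plan is to use Theorem~\ref{thm_easy2} to upgrade closeness of the DN maps to exact equality of the lens relations, and then to apply Corollary~\ref{col3} on the foliated piece $M_0$ together with classical boundary/lens rigidity for simple conformal metrics on $M_1$.

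First I would choose $\eps>0$ small enough that $c=\tilde c$ on an $\eps$-neighborhood of $\bo$, and exhaust $\partial_- SM$ by compact sets $\mathcal{K}_j$. Because $T>T_0(M)$ and both $M_0$ (foliated) and $M_1$ (simple) are non-trapping for $g$, one has $\max_{\mathcal{K}_j}\ell<\infty$, so Theorem~\ref{thm_easy2} applies with $\Gamma_1=\Gamma_2=\bo$ on each $\mathcal{K}_j$. Thus the smallness of $\|\Lambda_g-\Lambda_{\tilde g}\|$ forces $L_g=L_{\tilde g}$ and $\ell_g=\ell_{\tilde g}$ on every $\mathcal{K}_j$, hence globally on $\partial_- SM$. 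I would then apply Corollary~\ref{col3} on $M_0$: its hypotheses (strictly convex foliation, boundary agreement of $c$, and lens equality on the relevant vectors) are now all in force, and the corollary yields $c=\tilde c$ on $M_0$.

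To finish, I would handle $M_1$. Since $M_0$ and $M_1$ are relatively open with $M_0\cup M_1=M$, the part of the boundary of $M_1$ lying in $\Mint$ is contained in $M_0$, where we have just shown $c=\tilde c$; combined with $c=\tilde c$ on $\bo$, this gives agreement in a neighborhood of all of the boundary of $M_1$. The lens data of the simple piece $M_1$, obtained by restricting to maximal geodesics contained in $M_1$, coincide for $g$ and $\tilde g$, and for simple manifolds within a fixed conformal class, injectivity of the geodesic X-ray transform (a Mukhometov-type result; see e.g.\ \cite{Mu2,SU-IMRN}) yields $c=\tilde c$ on $M_1$.

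The main obstacle is the uniform application of Theorem~\ref{thm_easy2}: its constant $\delta$ depends on $\mathcal{K}$, so one needs to pass from a compact exhaustion to global lens equality with a single smallness threshold. A clean way is to invoke the contrapositive form of the argument in the proof of Theorem~\ref{thm_easy2}: a single $\rho\in\partial_- SM$ at which the lens relations disagree already produces a contradiction for sufficiently small $h$, so closeness of the DN maps in the stated norm rules out any such $\rho$ at once. A secondary technical point is to verify that in Step three the simple-manifold rigidity result applies with boundary data posed on the full relative boundary of $M_1$, not only on $\bo\cap\overline{M_1}$; this is standard for conformal metrics since only the scalar function $c$ is to be recovered.
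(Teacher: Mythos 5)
Your proposal follows the paper's route exactly: the paper derives this corollary in a single line by combining Theorem~\ref{thm_easy2} (upgrading DN-map closeness to exact lens equality) with the rigidity results of \cite{SUV_localrigidity} (foliation-based layer stripping on $M_0$, then simple-manifold rigidity in the conformal class on $M_1$), which is precisely your three steps. On the uniformity obstacle you flag, note that the clean resolution is that since $c=\tilde c$ on a collar of $\bo$ and $\bo$ is strictly convex, near-tangential geodesics remain in the collar and their lens data agree automatically, so a single compact $\mathcal{K}\Subset\partial_-SM$ (the directions whose geodesics leave the collar) suffices and one fixed $\delta(\mathcal{K})$ works, rather than an exhaustion with possibly degenerating thresholds.
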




\begin{thebibliography}{10}

\bibitem{BaoZhang}
G.~Bao and H.~Zhang.
\newblock Sensitivity analysis of an inverse problem for the wave equation with
  caustics.
\newblock {\em arXiv:1211.6220}, 2012.

\bibitem{Belishev_87}
M.~I. Belishev.
\newblock An approach to multidimensional inverse problems for the wave
  equation.
\newblock {\em Dokl. Akad. Nauk SSSR}, 297(3):524--527, 1987.

\bibitem{BelishevK92}
M.~I. Belishev and Y.~V. Kurylev.
\newblock To the reconstruction of a {R}iemannian manifold via its spectral
  data ({BC}-method).
\newblock {\em Comm. Partial Differential Equations}, 17(5-6):767--804, 1992.
\newblock MR1177292.

\bibitem{BellassouedDSF}
M.~Bellassoued and D.~Dos Santos~Ferreira.
\newblock Stability estimates for the anisotropic wave equation from the
  {D}irichlet-to-{N}eumann map.
\newblock {\em Inverse Probl. Imaging}, 5(4):745--773, 2011.

\bibitem{Croke05}
C.~Croke.
\newblock Boundary and lens rigidity of finite quotients.
\newblock {\em Proc. Amer. Math. Soc.}, 133(12):3663--3668 (electronic), 2005.

\bibitem{Croke_scatteringrigidity}
C.~Croke.
\newblock {S}cattering rigidity with trapped geodesics.
\newblock peprint, 2012.

\bibitem{CrokeH02}
C.~Croke and P.~Herreros.
\newblock Lens rigidity with trapped geodesics in two dimensions.
\newblock {\em preprint}, 2012.
\newblock
  http://www.math.upenn.edu/$\sim$ccroke/dvi-papers/SurfaceLensRigidity.pdf.

\bibitem{KathcalovKL-book}
A.~Katchalov, Y.~Kurylev, and M.~Lassas.
\newblock {\em Inverse boundary spectral problems}, volume 123 of {\em Chapman
  \& Hall/CRC Monographs and Surveys in Pure and Applied Mathematics}.
\newblock Chapman \& Hall/CRC, Boca Raton, FL, 2001.

\bibitem{LasieckaLT}
I.~Lasiecka, J.-L. Lions, and R.~Triggiani.
\newblock Nonhomogeneous boundary value problems for second order hyperbolic
  operators.
\newblock {\em J. Math. Pures Appl.}, 65(2):149--192, 1986.

\bibitem{LassasSU}
M.~Lassas, V.~Sharafutdinov, and G.~Uhlmann.
\newblock Semiglobal boundary rigidity for {R}iemannian metrics.
\newblock {\em Math. Ann.}, 325(4):767--793, 2003.

\bibitem{ShitaoO}
S.~Liu and L.~Oksanen.
\newblock A lipschitz stable reconstruction formula for the inverse problem for
  the wave equation.
\newblock {\em Trans. Amer. Math. Soc.}, 2015.

\bibitem{Carlos_12}
C.~Montalto.
\newblock Stable determination of a simple metric, a covector field and a
  potential from the hyperbolic {D}irichlet-to-{N}eumann map.
\newblock {\em Comm. Partial Differential Equations}, 39(1):120--145, 2014.

\bibitem{Mu2}
R.~G. Muhometov.
\newblock On a problem of reconstructing {R}iemannian metrics.
\newblock {\em Sibirsk. Mat. Zh.}, 22(3):119--135, 237, 1981.

\bibitem{PestovU}
L.~Pestov and G.~Uhlmann.
\newblock Two dimensional compact simple {R}iemannian manifolds are boundary
  distance rigid.
\newblock {\em Ann. of Math. (2)}, 161(2):1093--1110, 2005.

\bibitem{SU-JAMS}
P.~Stefanov and G.~Uhlmann.
\newblock Boundary rigidity and stability for generic simple metrics.
\newblock {\em J. Amer. Math. Soc.}, 18(4):975--1003, 2005.

\bibitem{SU-IMRN}
P.~Stefanov and G.~Uhlmann.
\newblock Stable determination of generic simple metrics from the hyperbolic
  {D}irichlet-to-{N}eumann map.
\newblock {\em Int. Math. Res. Not.}, 17(17):1047--1061, 2005.

\bibitem{SU-lens}
P.~Stefanov and G.~Uhlmann.
\newblock Local lens rigidity with incomplete data for a class of non-simple
  {R}iemannian manifolds.
\newblock {\em J. Differential Geom.}, 82(2):383--409, 2009.

\bibitem{SU-thermo}
P.~Stefanov and G.~Uhlmann.
\newblock Thermoacoustic tomography with variable sound speed.
\newblock {\em Inverse Problems}, 25(7):075011, 16, 2009.

\bibitem{SU-thermo_brain}
P.~Stefanov and G.~Uhlmann.
\newblock Thermoacoustic tomography arising in brain imaging.
\newblock {\em Inverse Problems}, 27(4):045004, 26, 2011.

\bibitem{SUV_localrigidity}
P.~Stefanov, G.~Uhlmann, and A.~Vasy.
\newblock Boundary rigidity with partial data.
\newblock {\em  arXiv:1306.2995}, 2013.

\bibitem{tataru95}
D.~Tataru.
\newblock Unique continuation for solutions to {PDE}'s; between {H}\"ormander's
  theorem and {H}olmgren's theorem.
\newblock {\em Comm. Partial Differential Equations}, 20(5-6):855--884, 1995.

\bibitem{Zworski_book}
M.~Zworski.
\newblock {\em Semiclassical analysis}, volume 138 of {\em Graduate Studies in
  Mathematics}.
\newblock American Mathematical Society, Providence, RI, 2012.

\end{thebibliography}

\end{document}